\newtheorem{theorem}{Theorem}[section]
\newtheorem{proposition}[theorem]{Proposition}
\theoremstyle{definition}
\newtheorem{definition}[theorem]{Definition}
\newtheorem{example}[theorem]{Example}
\theoremstyle{remark}
\numberwithin{equation}{section}
\begin{document}

\setcounter{page}{1}

\title[A generalization of  $b$-weakly compact operators]{A generalization of  $b$-weakly compact operators}

\author[Kazem Haghnejad Azar]{Kazem Haghnejad Azar}

\address{Department of Mathematics, University of Mohaghegh Ardabili, Ardabil, Iran.}
\email{\textcolor[rgb]{0.00,0.00,0.84}{haghnejad@uma.ac.ir;
}}


\let\thefootnote\relax\footnote{Copyright 2018 by the Tusi Mathematical Research Group.}

\subjclass[2010]{Primary 46B42; Secondary 47B60.}

\keywords{Banach lattice, $KB$-operator, $WKB$-operator,
 b-weakly compact operator.}

\date{Received: xxxxxx; Revised: yyyyyy; Accepted: zzzzzz.
\newline \indent $^{*}$Corresponding author}

\begin{abstract}
A. Bahramnezhad and K. Haghnejad Azar    introduced the classes of  $KB$-operators and $WKB$-operators, and they studied some of theirs properties. In the present paper, we give  answer for an open problem from that paper, which two classifications of operators, $b$-weakly compact operators  and  $KB$-operators   are different.  A continuous operator $T$ from a normed vectoe lattice $E$ into a normed space $X$ is said to be $KB$-operator (respectively, $WKB$-operator)  if $\{Tx_n\}_n$ has a norm (respectively, weak) convergent subsequence in $X$ for every positive increasing sequence $\{x_n\}_n$ in the closed unit ball $B_E$ of $E$. We investigate some other properties of $KB$-operators and its relationships with $b-$weakly compact operators.
\end{abstract} \maketitle

\section{\textbf{Introduction and preliminaries}}
A subset $A$ of a vector lattice $E$ is called $b$-order bounded in $E$ if it is order bounded in $E^{\sim\sim}$.  An operator $T:E\rightarrow X$,  mapping each $b$-order bounded subset of $E$ into a relatively weakly compact subset of $X$ is called a $b$-weakly compact operator. Alpay, Altin and Tonyali introduced the class of $b$-weakly compact operators for vector lattices having separating order duals \cite{2}. In  \cite{3}, Alpay and Altin proved that a continuous operator $T$ from a Banach lattice $E$ into a Banach space $X$ is $b$-weakly compact if and only if $\{Tx_n\}_n$ is norm convergent for each $b$-order bounded increasing sequence $\{x_n\}_n$  in  $E^+$ if and only if  $\{Tx_n\}_n$ is norm convergent to zero for each $b$-order bounded disjoint sequence $\{x_n\}_n$ in $E^+$.  Authors in \cite{6} proved that an operator $T$ from a Banach lattice $E$ into a Banach space $X$ is $b$-weakly compact if and only if $\{Tx_n\}_n$  is norm convergent for every positive increasing sequence $\{x_n\}_n$ of the closed unit ball  $B_E$ of $E$. The aim of this paper is studied  the  classes of operators on Banach lattices are called $KB$-operators and $WKB$-operators. We will investigate on theirs properties. A continuous operator $T$ from a Banach lattice $E$ into a normed space $X$ is said to be $KB$-operator ( $WKB$-operato),  if $\{Tx_n\}_n$ has a norm  ( weak) convergent subsequence in $X$ for every positive increasing sequence $\{x_n\}_n$ in the closed unit ball $B_E$ of $E$, see \cite{9}.  To state our results, we need to fix some notation and recall some definitions.\\ 
  A Banach lattice $E$ is is said to be an $AM$-space if for each $x,y\in E$ such that $|x|\wedge |y|=0$, we have $\|x+y\|= max \{\|x\|, \|y\|\}$. A Banach lattice $E$ is an $AL$-space if its topological dual $E^\prime$ is an $AM$-space. A Banach lattice $E$ is said to be $KB$-space whenever each increasing norm bounded sequence of $E^+$ is norm convergent. An operator $T: E\rightarrow F$ between two Riesz spaces  is positive if $T(x)\geq 0$ in $F$ whenever $x\geq 0$ in $E$. Note that each positive linear mapping on a Banach lattice is continuous.  An operator $T$ from a Banach space $X$ into a Banach space $Y$ is compact (resp. weakly compact) if $\overline{{T(B _ X)}}$  is compact (resp. weakly compact) where $B _ X$ is the closed unit ball of $X$.   
For terminology concerning Banach lattice theory and positive operators, we refer the reader to the excellent book of \cite{1}.\\
  As $b$-weakly compact operators \cite{7}, the class of $KB$-operators and $WKB$-operators does not satisfy duality property. In fact the identity operator of the Banach lattice $\ell^1$ is a $KB$-operator (respectively,  $WKB$-operator); but its adjoint which is the identity operator of the Banach lattice $\ell^ \infty $, is not a  $KB$-operator (respectively,  $WKB$-operator). Conversely, the identity operator of the Banach lattice $c_0$ is not a $KB$-operator (respectively,  not $WKB$-operator); but its adjoint, which is the identity operator of the Banach lattice $\ell^1$, is a $KB$-operator (respectively,  $WKB$-operator). Recall that a Banach space is said to have Schur property whenever every weak convergent sequence is norm convergent, i.e., whenever $x_n \xrightarrow{w}0$ implies $\|x_n\| \rightarrow 0$. Let $E$, $F$ be Banach lattices. If either $E$ or $F$ has the Schur property then $L(E,F)=W_b (E,F)$ \cite{3}. It is also clear that  for a Banach lattice  $E$ and  a Banach space $X$ with Schur property,  every  $WKB$-operator $T: E\rightarrow X$ is a $KB$-operator. Let $E$ be a vector lattice. A sequence  
 $\{x_n\}_1^ \infty\subset E$
 is 
 called  order  convergent  to 
 $x$
 as 
 $n \to  \infty$
 if there exists a sequence  
 $\{y_n\}^\infty_1$
 such  that  
 $y_n \downarrow 0$
 as
 $n  \rightarrow \infty$
 and 
 $|x_n -x| \leq  y_n$
 for  all 
 $n$. We will write $x_n\xrightarrow{o_1}x$ when $\{x_n\} $ is order convergent to $x$. A sequence $\{x_n\}$ in a vector lattice  $E$ is    strongly order convergent  to  $x\in E$, denoted by $x _n \xrightarrow{o_2}x$ whenever there exists a net $\{y_\beta\}_{\beta\in \mathcal{B}}$  in $E$ such that $y_\beta \downarrow 0$ and that for every $\beta\in \mathcal{B}$, there exists $n_0$ such that  $|x_n -x| \leq y_\beta$ for all $n\geq n_0$. It is clear that every order convergent sequence is strongly order convergent, but two convergence are different, for information see, \cite{1b}. A net $ (x_{\alpha})_{\alpha}$ in Banach lattice $E$ is unbounded norm convergent (or, $un$-convergent for short) to $ x \in E $ if $ | x_{\alpha} - x | \wedge u\xrightarrow{\|.\|} 0$ for all $ u \in E^{+} $.  We denote this convergence by $ x_{\alpha} \xrightarrow{un}x $. This convergence has been introduced and studied in \cite{9d, 10a}.

\section{Main Results}
In each parts of this manuscript,  $E$ is a normed vector lattice and $X$ normed space. The definition of $b-$weakly compact operator holds whenever $E$ is a  normed vector lattice and $X$ normed space. The collections of $KB$-operators,  $WKB$-operators,  b-weakly compact operators,   order weakly compact operators,  weakly compact operators and compact operators will be denoted by
 $L_{KB}(E,X)$, $W_{KB}(E,X)$, $W_b(E,X)$, $W_o(E,X)$, $W(E,X)$ and $K(E,X)$, respectively,  whenever there is not confused. By notice an example from  \cite{10}, page 95, the classification of order compact or order weakly compact operators from Banach lattice $E$ into Banach space $X$, in general, are not subspace of  $L_{KB}(E,X)$, but by Theorem 3.4.4, from \cite{10}, every interval-bounded and order weakly compact operator $T:E\rightarrow X$ is $b-$weakly compact operator, and so  $KB-$operator. On the other hand, if $e\in E$ is an atom, then $L_{KB}(E_e,F)=W_o(E_e,F)$. We have the following relationships between these spaces:
$$K(E,X) \subseteq W(E,X)\subseteq W_b(E,X)\subseteq L_{KB}(E,X)\subseteq W_{KB}(E,X).$$

Let $E$  be normed vector lattices, $X$ normed space and $T: E\rightarrow X$ be a positive operator. By using Prpostion 2.1 from \cite{9},  $ T\in W_b (E,X)$ iff $ T\in L_{KB}(E,X)$ iff $T\in W_{KB}(E,X)$ as follows. 
\begin{proposition}\cite{9} \label{2.1}
Let $E$ and $F$ be Banach lattices and $T: E\rightarrow F$ be a positive operator. Then the following statements are equivalent.
\begin{enumerate}
\item $T$ is $b$-weakly compact.
\item  $T$ is $KB$-operator.
\item  $T$ is $WKB$-operator.
\end{enumerate}
\end{proposition}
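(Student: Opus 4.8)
The plan is to lean on the inclusions already displayed in this section, $W_{b}(E,F)\subseteq L_{KB}(E,F)\subseteq W_{KB}(E,F)$, which give $(1)\Rightarrow(2)\Rightarrow(3)$ for \emph{every} continuous operator, so that the whole content of the statement is the implication $(3)\Rightarrow(1)$ --- and this is exactly where the hypothesis $T\geq 0$ is needed. For $(3)\Rightarrow(1)$ I would invoke the criterion of \cite{6} recalled in the introduction: $T$ is $b$-weakly compact if and only if $\{Tx_{n}\}_{n}$ is norm convergent for every positive increasing sequence $\{x_{n}\}_{n}$ in $B_{E}$.

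First I would fix such a sequence $\{x_{n}\}$. Since $T\geq 0$, $\{Tx_{n}\}$ is increasing in $F$, and it is norm bounded by $\|T\|$; since $T$ is a $WKB$-operator, pass to a subsequence with $Tx_{n_{k}}\xrightarrow{w}y$. Using that $F^{+}$ is norm closed and convex, hence weakly closed, and that $Tx_{n_{k}}\geq Tx_{m}$ once $n_{k}\geq m$, letting $k\to\infty$ gives $y\geq Tx_{m}$ for \emph{every} $m$. Set $u_{k}:=y-Tx_{n_{k}}\geq 0$; this sequence is decreasing and $u_{k}\xrightarrow{w}0$, so $\|u_{k}\|\downarrow\delta$ for some $\delta\geq 0$, and it is enough to prove $\delta=0$: then $\|y-Tx_{n}\|$ --- a decreasing sequence of reals with a null subsequence --- tends to $0$, i.e. $\{Tx_{n}\}$ converges in norm.

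To see $\delta=0$ I would argue by contradiction. If $\delta>0$, choose for each $k$ a functional $g_{k}\in B_{F'}$ with $g_{k}(u_{k})>\delta/2$, and replace it by $|g_{k}|$ to arrange $g_{k}\geq 0$ (legitimate since $u_{k}\geq 0$). Let $f$ be a weak$^{*}$ cluster point of $\{g_{k}\}$ in $B_{F'}$ (Alaoglu); then $f\geq 0$. Fixing $j$ and using $u_{k}\leq u_{j}$ for $k\geq j$, one gets $g_{k}(u_{j})\geq g_{k}(u_{k})>\delta/2$, and passing to a subnet along which $g_{k}\to f$ weakly$^{*}$ (indices tending to infinity) gives $f(u_{j})\geq\delta/2$ for every $j$; this contradicts $u_{j}\xrightarrow{w}0$, which forces $f(u_{j})\to 0$. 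Hence $\delta=0$, and by \cite{6} the operator $T$ is $b$-weakly compact, so $(1)\Leftrightarrow(2)\Leftrightarrow(3)$.

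The step I expect to be the main obstacle is the one just sketched: turning the weak-null decreasing sequence $\{u_{k}\}$ into a norm-null one. This cannot be read off from order convergence alone --- it is precisely the behaviour that separates $KB$ from non-$KB$ Banach lattices --- so the weak$^{*}$ cluster-point device, with the attendant care about subnets versus subsequences and about the domination $u_{k}\leq u_{j}$ for $k\geq j$, seems unavoidable. The remaining ingredients --- monotonicity of the lattice norm, weak closedness of $F^{+}$, and the bookkeeping between $\{Tx_{n}\}$ and its chosen subsequence --- are routine. (One could equally work in $F''$ with its weak$^{*}$ topology, or use the disjoint-sequence characterization of $b$-weak compactness of Alpay and Altin, but the increasing-sequence criterion of \cite{6} keeps the argument shortest.)
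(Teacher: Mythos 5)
Your argument is correct. Note first that the paper does not prove this proposition at all: it is quoted from \cite{9}, so there is no internal proof to compare against. Your reduction is the standard one and matches how such statements are handled both in \cite{9} and elsewhere in this paper: $(1)\Rightarrow(2)\Rightarrow(3)$ is immediate from the displayed inclusions $W_b\subseteq L_{KB}\subseteq W_{KB}$ (valid for any continuous operator), and the substance is $(3)\Rightarrow(1)$ via the criterion of \cite{6} together with the fact that positivity makes $\{Tx_n\}$ increasing. The one place where you diverge from the usual write-up is that you re-prove, by hand, the classical lemma that an increasing, norm-bounded sequence in a Banach lattice with a weakly convergent subsequence is norm convergent to that weak limit: your chain --- $y\geq Tx_m$ for all $m$ by weak closedness of $F^{+}$, $u_k=y-Tx_{n_k}\downarrow$ with $u_k\xrightarrow{w}0$, and the weak$^{*}$ cluster-point argument showing $\|u_k\|\downarrow 0$, then monotonicity to pass from the subsequence back to the full sequence --- is exactly a proof of Theorem 1.4.1 of \cite{10} (equivalently, Theorem 3.52 of \cite{1}), which this paper itself invokes in the proof of Theorem \ref{thm}(1). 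So your proof is self-contained where the standard one cites a known result; citing that lemma would shorten your $(3)\Rightarrow(1)$ to three lines, but every step you give (including the subnet-versus-subsequence care and the replacement of $g_k$ by $|g_k|$, legitimate since $u_k\geq 0$) is sound.
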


 In the following example, we give  answer for an open problem from \cite{9}, which  two classifications of operators $W_b(E,X)$  and  $L_{KB}(E,X)$ are different. In the other words, there is a $KB$-operator $T$ from normed vector lattice $E$ into Banach space $X$ that is not $b-$weakly compact operator. 
  
\begin{example}\label{2.2}
 In the following, we show that the inclusion $W_b(E,F)\subseteq L_{KB}(E,F)$ may be proper whenever $E$ is normed vector lattice and $F$ is a Banach space.
 \begin{proof}
 Authors in Theorem 2.10 of \cite{6} proved that an operator $T$ from Banach lattice $E$ into Banach lattice $F$ is $b-$weakly compact if and only if $(T(x_{n}))_{n=1}^{+\infty}$ is norm convergent for every positive increasing sequence $(x_{n})$ in the closed unit ball $B_{E}$ of $E$. Now this theorem holds when  $E$ is normed vectoe lattice.\\
Suppose that  $E=\langle \lbrace \chi_{[0,r]}\ :\ r \in \mathbb{R}^{+}\rbrace \rangle$ where $\chi_{[0,r]}$ is a characteristic function on $[0,r]$.  It is clear that $E$ is a subspace of  $L^{\infty}([0,+\infty))$ and 
 with $\Vert . \Vert_{\infty}$ is normed  vector lattice. 
We define an operator  $T:E \rightarrow l^{\infty}$ with 
\begin{align*}
T(f)=( \int_{0}^{+\infty} f(x) \cos n \pi x dx)_{n=1}^{+\infty} \quad\quad\text{for~ all} \quad f \in E. 
\end{align*}
$T$ is a bounded linear operator of course 
\begin{equation*}
|\int_{0}^{+\infty}f(x)\cos n\pi x dx| \leq \int_{0}^{+\infty}| f(x) | dx <+\infty.
\end{equation*}
Now the proof will be based upon three claims as follows:\\
\textbf{claim 1:} $T$ is not positive operator.\\
 If we set $f=\chi_{[0,\frac{1}{2}]}$, then $Tf= (\frac{1}{n\pi} \sin \frac{n\pi}{2})_{n=1}^{+\infty}$ which shows that $T$ is not positive operator.\\
 \textbf{claim 2:} $T$ is not $b-$weakly compact operator.\\
  Let $f_{m}=\chi_{[0,m+\frac{1}{2}]}$ where $m \in \mathbb{N}$. Then $0\leq f_{m}\uparrow$ and $\mathop{\text{sup}}_{m} \Vert f_{m} \Vert_{\infty}<+\infty$, but $(Tf_{m})_{m=1}^{+\infty}$ is not convergent.\\
 \textbf{claim 3:} $T$ is a $KB-$operator.\\
  Let $\lbrace f_{m}\rbrace \subseteq E$, $0\leq f_{m}\uparrow$ and  $\mathop{\text{sup}}_{m} \Vert f_{m} \Vert_{\infty}<+\infty$. We write $Tf_{m}=a_{m,n}$ where $a_{m,n}=\int_{0}^{+\infty}f_{m}(x) \cos nx dx$.\\
 For fix $n \in \mathbb{N}$, the sequence $(a_{m,n})_{m=1}^{+\infty}$ is norm bounded in $\mathbb{R}$, and so it has a subsequence  $(a_{k_{m},n})_{m=1}^{+\infty}$ which is convergent. It is clear $0 \leq f_{k_{m}} \uparrow$ and $\mathop{\text{sup}}_{m} \Vert f_{k_{m}} \Vert_{\infty}<+\infty$. It follows that $(a_{k_{m},n+1})_{m=1}^{+\infty}$ is norm bounded in $\mathbb{R}$, and so it has also subsequence  $(a_{(k+1)_{m},n+1})_{m=1}^{+\infty}$ which is convergent. Thus $(a_{m_{m},n})_{m=1}^{+\infty}$ is a subsequence of $(a_{m,n})_{m=1}^{+\infty}$ which is convergent for each $n \in \mathbb{N}$. On the other hand, $(Tf_{m_{m}})_{m=1}^{+\infty}$ is subsequence of $(Tf_{m})_{m=1}^{+\infty}$ which is convergent. Now, $T$ is a $KB-$operator.
 \end{proof}
 \end{example}
Proposition \ref{2.1}, shows that   $L_{KB}(E,F)=W_b(E,F)$ whenever  $L_{KB}(E,F)$ is vector lattice. Consequently,  the above example implies that in general  $L_{KB}(E,F)$ is not vector lattice. On the other hand, in general, $L_{KB}(E,F)$ is not order dense in $B(E,F)$. Note that if we set $E=F=c_0$ and $T_n\in L_{KB}(c_0,c_0)$ defined  by $T_n(x_1,x_2,...)=(x_1,x_2,...,x_n,0,0,...)$, then 
$0\leq T_n\uparrow I_{c_0}$. But  $I_{c_0}$ is not $KB$-operator. Then  by  Proposition \ref{2.1},  $W_b(E,X)$, in general,  is also not order dense in $B(E,F)$.\\
In the following we show that $L_{KB}(E,F)$ is a norm closed subspace of $B(E,F)$ whenever $F$ is a Banach lattice.

 \begin{theorem}
The collection of all $KB$-operators from $E$ into Banach lattice $F$ is norm closed vector subspace of $B(E,F)$. 
 \end{theorem}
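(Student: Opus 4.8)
The plan is to show two things: first, that $L_{KB}(E,F)$ is a vector subspace of $B(E,F)$, and second, that it is closed in the operator norm. Linearity is routine: if $S,T$ are $KB$-operators and $(x_n)$ is a positive increasing sequence in $B_E$, then $(Sx_n)$ has a norm-convergent subsequence $(Sx_{n_k})$, and passing to the further subsequence along which $(Tx_{n_k})$ converges (which exists since the tail $(x_{n_k})$ is again positive increasing in $B_E$ and $T$ is a $KB$-operator), we get that $((\alpha S+\beta T)x_{n_k})$ converges; boundedness of $\alpha S+\beta T$ is immediate. So the heart of the matter is norm-closedness.

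For closedness, let $(T_j)$ be a sequence of $KB$-operators with $\|T_j-T\|\to 0$ for some $T\in B(E,F)$; I want to show $T$ is a $KB$-operator. Fix a positive increasing sequence $(x_n)_n$ in $B_E$. The strategy is a diagonal argument against the approximating operators. Since $T_1$ is a $KB$-operator, extract a subsequence of $(x_n)$ on which $(T_1 x_n)$ converges in norm; note this subsequence is still positive increasing and lies in $B_E$. Since $T_2$ is a $KB$-operator, extract a further subsequence on which $(T_2 x_n)$ converges; continue, and take the diagonal subsequence $(x_{n_k})_k$, so that $(T_j x_{n_k})_k$ converges in norm for every fixed $j$. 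Now I claim $(T x_{n_k})_k$ is norm Cauchy in $F$, hence convergent since $F$ is a Banach lattice (complete). For $\varepsilon>0$, pick $j$ with $\|T_j-T\|<\varepsilon/3$; since $(x_{n_k})\subseteq B_E$, we have $\|T x_{n_k}-T_j x_{n_k}\|<\varepsilon/3$ for all $k$; since $(T_j x_{n_k})_k$ converges it is Cauchy, so $\|T_j x_{n_k}-T_j x_{n_l}\|<\varepsilon/3$ for $k,l$ large; the triangle inequality then gives $\|T x_{n_k}-T x_{n_l}\|<\varepsilon$ for $k,l$ large. Thus $(T x_{n_k})_k$ converges, so $T$ is a $KB$-operator.

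The main obstacle — and really the only point that needs care — is ensuring the diagonalization produces a single subsequence that works simultaneously against all the $T_j$ while keeping the defining structure of the sequence intact: each successive subsequence of a positive increasing sequence in $B_E$ is again positive increasing in $B_E$, so the $KB$-operator property of each $T_j$ can be invoked at every stage, and the standard diagonal extraction (the tail of the diagonal sequence from index $j$ onward is a subsequence of the $j$-th extracted sequence) then gives convergence of $(T_j x_{n_k})_k$ for every $j$. After that, the $\varepsilon/3$ estimate above is entirely routine and uses only the uniform norm bound $\|x_{n_k}\|\le 1$ together with completeness of $F$. I would also remark that the same argument shows $W_{KB}(E,F)$ need not be treated here; only the norm-convergence version is claimed, and completeness of $F$ is exactly what lets the Cauchy sequence $(T x_{n_k})_k$ converge.
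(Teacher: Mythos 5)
Your proof is correct and follows essentially the same route as the paper: a diagonal extraction of nested subsequences so that $(T_j x_{n_k})_k$ converges for every $j$, followed by an $\varepsilon/3$ triangle-inequality estimate showing $(Tx_{n_k})_k$ is norm Cauchy, hence convergent by completeness of $F$. Your write-up is in fact a bit more careful than the paper's (whose displayed inequality has index typos and which omits the routine subspace part that you include), but there is no difference in method.
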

 \begin{proof}
 Let $\{T_n\}\subseteq B(E,F)$  be a sequence of $KB$-operators such that
 \begin{equation*}
  \Vert T_n-T\Vert\rightarrow 0 
 \end{equation*}  
  holds in $B(E,F)$ and let $\{x_m\}$ be a positive increasing sequence in $E$ with $\text{sup}_m\Vert x_m \Vert<\infty$. Since $T_1\in L_{KB}(E,F)$, there is a subsequence $\{x_{m,1}\}$ of  $\{x_m\}$ which $\{T_1x_{m,1}\}$ is norm convergent in $F$. Choose subsequence  $\{x_{m,n}\}$ and 
  $\{x_{m,{n+1}}\}$ of $\{x_m\}$ as follows
 \begin{enumerate}
\item $\{x_{m,{n+1}}\}$ is a subsequence of $\{x_{m,n}\}$ for each $n\in\mathbb{N}$..
\item $\{Tx_{m,n}\}_m$ is norm convergent in $F$ for each $n\in\mathbb{N}$.
 \end{enumerate}
 It follows that $\{T_nx_{m,m}\}_m$ is norm convergent   for each $n\in\mathbb{N}$.
From following inequalities  
 \begin{equation*}
 \Vert Tx_{m,m}-Tx_{k,k} \Vert\leq \Vert Tx_{m,m}-T_nx_{m,m}\Vert+\Vert T_nx_{m,m}-T_kx_{m,m}\Vert\\  
 +\Vert T_kx_{m,m}-Tx_{m,m}\Vert,
\end{equation*}

 the sequence $\{Tx_{m,m}\}_m$ is a norm Cauchy, and so   norm  convergent in $F$.
 \end {proof}
 
\begin{theorem}
Let $E$ and $F$ be Banach lattices where $E$ has order continuous norm. Let $G$ be a sublattice order dense in $E$ and $T$ be a positive operator from $E$ into $F$.  If $T\in L_{KB}(G,F)$, then $T\in L_{KB}(E,F)$.
\end{theorem}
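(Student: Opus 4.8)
The plan is to prove the stronger statement that $\{Tx_n\}_n$ is norm convergent for every positive increasing sequence $\{x_n\}_n$ in $B_E$; this in particular produces a norm convergent subsequence, so it yields $T\in L_{KB}(E,F)$ (and, by Proposition \ref{2.1}, also $T\in W_b(E,F)$). The key ingredient is an approximation ``from below'': since $G$ is order dense in $E$, every $x\in E^+$ is the supremum in $E$ of the upward directed set $\{g\in G:0\le g\le x\}$ (see \cite{1}), and since $E$ has order continuous norm this net converges to $x$ in norm. Hence for each $x\in E^+$ and each $\varepsilon>0$ there is $g\in G$ with $0\le g\le x$ and $\|x-g\|<\varepsilon$. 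It is essential here that $g$ can be kept inside the order interval $[0,x]$: mere norm density of $G$ would not be enough, because in what follows the approximants must stay in $B_G$ after taking finite suprema.

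Now fix $\varepsilon>0$. First I would choose, for each $n$, an element $y_n\in G$ with $0\le y_n\le x_n$ and $\|x_n-y_n\|<\varepsilon$, and then set $z_n:=y_1\vee\cdots\vee y_n\in G$ (using that $G$ is a sublattice). The sequence $\{z_n\}_n$ is positive and increasing, and $0\le z_n\le x_1\vee\cdots\vee x_n=x_n$, so $\|z_n\|\le\|x_n\|\le 1$; thus $\{z_n\}_n$ is a positive increasing sequence in $B_G$. Since $T\in L_{KB}(G,F)$, the sequence $\{Tz_n\}_n$ has a norm convergent subsequence; being increasing in $F^+$ (because $T\ge 0$), it is itself norm convergent, hence norm Cauchy, so there is $N$ with $\|Tz_m-Tz_k\|<\varepsilon$ for all $m,k\ge N$. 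Moreover $z_n\ge y_n$ gives $0\le x_n-z_n\le x_n-y_n$, so positivity of $T$ and monotonicity of the lattice norm on $F$ yield $\|T(x_n-z_n)\|\le\|T(x_n-y_n)\|\le\|T\|\,\varepsilon$ for every $n$.

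Combining these, for all $m,k\ge N$,
\begin{equation*}
\|Tx_m-Tx_k\|\le\|T(x_m-z_m)\|+\|Tz_m-Tz_k\|+\|T(x_k-z_k)\|<(1+2\|T\|)\,\varepsilon .
\end{equation*}
As $\varepsilon>0$ was arbitrary, $\{Tx_n\}_n$ is norm Cauchy in the Banach space $F$, hence norm convergent, which is exactly what is needed.

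I expect the only genuine obstacle to be the approximation statement of the first paragraph: one must combine the standard description of order dense Riesz subspaces (that $x=\sup\{g\in G:0\le g\le x\}$ in $E$ for each $x\in E^+$) with the order continuity of the norm on $E$ in order to pass from this order-theoretic supremum to a true norm approximation by elements below $x$; both facts are available in \cite{1}. The rest is only the triangle inequality together with positivity of $T$, plus the device of replacing the $y_n$ by their running suprema $z_n$ to recover monotonicity and stay inside $B_G$.
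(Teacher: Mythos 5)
Your proof is correct and follows essentially the same route as the paper: approximate each $x_n$ from below by elements of $G$ using order density plus the order continuity of the norm of $E$, restore monotonicity by taking finite suprema in the sublattice $G$, apply the hypothesis on $T|_G$, and finish with positivity of $T$ and the triangle inequality. Your fixed-$\varepsilon$ bookkeeping (single approximants $y_n$, running suprema $z_n$, and the observation that an increasing sequence with a convergent subsequence converges) is in fact a cleaner rendering of the paper's double-indexed argument with $z_{mn}=\vee_{i=1}^{n}y_{mi}$, but it is not a different method.
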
 
\begin{proof}
By using Propostion \ref{2.1} from \cite{9}, it is enough to give a proof for $b-$weakly compact operators. Let $\{x_n\}$ be a positive increasing sequence in $E$ with $\text{sup}_n\Vert x_n \Vert<\infty$. Since $G$ is order dense in $E$, by Theorem 1.34 from \cite{1}, we have
\begin{equation*}
\{y\in G:~0\leq y\leq x_n\}\uparrow x_n,
\end{equation*}
for each $n$. Let $\{y_{mn}\}_{m=1}^\infty\subset G$ with $0\leq y_{mn}\uparrow x_n$ for each $n$. Put $z_{mn}=\vee_{i=1}^n y_{mi}$ and  $0\leq T\in L(E,F)$. Its follows that $z_{mn}\uparrow_m x_n$ and $\text{sup}_{m,n}\Vert z_{m,n}\Vert\leq \text{sup}_n\| x_n\|<\infty$. Now, if $T\in W_b(G,F)$, then $\{Tz_{mn}\}$ is norm convergent to some point $y\in F$. Now, we have the following inequalities
\begin{equation*}
\|Tx_n-Tz_{mn}\|\leq\|T\|\|x_n-z_{mn}\|\leq \|T\|\|x_n-y_{mn}\|\rightarrow 0.
\end{equation*}
Thus by the following inequality proof holds
 \begin{equation*}
\|Tx_n-y\|\leq\|Tx_n-Tz_{mn}\|+\|Tz_{mn}-y\|.
\end{equation*}
\end{proof} 
 
By notice to Remark 2.26 from \cite{9}, $L_{KB}(E,F)$ is not vector lattice whenever $E$ and $F$ are Banach lattices. On the other hand, if $T\in L_{KB}(E,F)$, in general, the modulas of $T$ need not be a $KB-$operators. Now in the following theorem, we show that $T\in L_{KB}(E,F)$ whenever $\vert T\vert\in L_{KB}(E,F)$.

\begin{theorem}\label{t:2.6}
Let $E$ and $F$ be normed vector lattices. We have the following assertions.
\begin{enumerate}
\item If  $T:E\rightarrow F$ is an order bounded operator and $F$ is $KB-$space, then $T$ and $\vert T\vert$ are $KB-$operators.
\item If $\vert T \vert$ is $KB-$operator or $b-$weakly compact operator, then
 \begin{equation*}
 T \in L_{KB}(E,F) \cap W_{b}(E,F)
 \end{equation*}
 \end{enumerate}
 \end{theorem}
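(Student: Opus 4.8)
The plan is to reduce everything to the single pointwise lattice inequality $|Tx|\le |T|(|x|)$, which is available as soon as the modulus $|T|$ exists as a (positive, hence automatically continuous) operator, and then to transfer norm convergence from the monotone orbit $\{|T|x_n\}$ to $\{Tx_n\}$ by a squeeze.

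For assertion (1): first I would note that a $KB$-space has order continuous norm and is therefore Dedekind complete, so that for the order bounded operator $T$ the modulus $|T|$ exists by the Riesz--Kantorovich formulas (see \cite{1}). Since $|T|\ge 0$ it is norm continuous, and $|Tx|\le |T|(|x|)$ yields $\|T\|\le\||T|\|$, so $T$ is bounded as well. Now let $\{x_n\}\subseteq B_E$ be positive and increasing. Then $0\le |T|x_1\le|T|x_2\le\cdots$ with $\||T|x_n\|\le\||T|\|$, i.e.\ this is a norm bounded increasing sequence in $F^{+}$; as $F$ is a $KB$-space it is norm convergent, so $|T|$ is a $KB$-operator. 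For $n\ge m$ we have $|Tx_n-Tx_m|=|T(x_n-x_m)|\le |T|(x_n-x_m)=|T|x_n-|T|x_m$, whence $\|Tx_n-Tx_m\|\le\||T|x_n-|T|x_m\|\to 0$; thus $\{Tx_n\}$ is norm Cauchy and, $F$ being complete, norm convergent. Hence $T$ is a $KB$-operator too.

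For assertion (2): since $|T|\ge 0$, Proposition \ref{2.1} makes the two hypotheses ``$|T|$ is a $KB$-operator'' and ``$|T|$ is $b$-weakly compact'' equivalent (and both equivalent to $|T|\in W_{KB}(E,F)$), so I may assume $|T|\in L_{KB}(E,F)\cap W_b(E,F)$. Let $\{x_n\}\subseteq B_E$ be positive and increasing. The sequence $\{|T|x_n\}$ is increasing in $F^{+}$ and, $|T|$ being a $KB$-operator, has a norm convergent subsequence; but an increasing sequence with a norm convergent subsequence is itself norm convergent (the positive cone is closed, so the limit dominates every term, and monotonicity gives the tail estimate $\|y-|T|x_m\|\le\|y-|T|x_{n_k}\|$ for $m\ge n_k$). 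Applying the same modulus inequality $|Tx_n-Tx_m|\le |T|x_n-|T|x_m$ for $n\ge m$, the sequence $\{Tx_n\}$ is norm Cauchy, hence norm convergent in $F$. As this holds for every positive increasing sequence in $B_E$, the characterization of $b$-weak compactness recalled in Example \ref{2.2} (Theorem 2.10 of \cite{6}, valid also for $E$ a normed vector lattice) gives $T\in W_b(E,F)$, and a fortiori $T\in L_{KB}(E,F)$.

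The only genuinely delicate point is in (1): one must guarantee that $|T|$ exists and is continuous, and this is exactly what the $KB$-space hypothesis on $F$ buys, since it provides Dedekind completeness for the Riesz--Kantorovich construction and forces the increasing orbit $\{|T|x_n\}$ to converge. Everything after that is the routine estimate $\|Tx_n-Tx_m\|\le\||T|x_n-|T|x_m\|$ combined with completeness of $F$; in (2) the extra observation needed is simply that a monotone sequence possessing a convergent subsequence converges, which is what lets ``$KB$-operator'' and ``$b$-weakly compact'' be used interchangeably for the positive operator $|T|$.
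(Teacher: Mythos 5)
Your argument is correct and, at bottom, runs on the same fuel as the paper's --- control $T$ by a positive operator and let the $KB$-structure do the work --- but the execution is genuinely different and more self-contained. In (1) the paper decomposes $T=T^{+}-T^{-}$, gets continuity of each part from positivity (Theorem 4.3 of \cite{1}), observes that $\{T^{\pm}x_n\}$ is increasing and norm bounded in $F^{+}$, and lets the $KB$-property of $F$ converge each part separately; you work with $\vert T\vert$ alone and transfer convergence to $\{Tx_n\}$ through the estimate $\Vert Tx_n-Tx_m\Vert\le\Vert\,\vert T\vert x_n-\vert T\vert x_m\Vert$ together with completeness of $F$ (legitimate, since a $KB$-space is by definition a Banach lattice). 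In (2) the paper quotes the domination property ($0\le T^{\pm}\le\vert T\vert$ forces $T^{\pm}$ to be $KB$-operators), Proposition 2.9 of \cite{9}, and Proposition \ref{2.1}; you replace those citations by the elementary lemma that a monotone sequence with a norm convergent subsequence converges, followed by the same modulus squeeze and the characterization from \cite{6} recalled in Example \ref{2.2}. The paper's route is shorter and leans on results imported from \cite{9}; yours is checkable line by line within this paper. One caveat you share with the paper rather than create: the theorem is stated for normed vector lattices, yet deducing continuity of $\vert T\vert$ (respectively $T^{\pm}$) from positivity strictly requires $E$ to be a Banach lattice, and in (2) your passage from norm Cauchy to norm convergent (like the paper's appeal to domination and to Proposition 2.9 of \cite{9}, which are proved for Banach lattices) tacitly uses norm completeness of $F$; in (1) this is harmless because $F$ is a $KB$-space.
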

 \begin{proof}
 \begin{enumerate}
\item Let $x_{n}\uparrow$ and $ \mathop{\text{sup}}_{n}\Vert x_{n}\Vert<\infty$. Since $T^{+}$ is positive, by Theorem 4.3 \cite{1}, $T^{+}$ is norm bounded. It follows that $\lbrace T^{+}x_{n} \rbrace$ is norm bounded.
 Thus $T^{+}$ is $KB-$operator. In similar way, $T^-$ is also $KB$-operator.   
 It follows that  $T$ and $\vert T\vert$  are $KB-$operators.
 \item Since $0 \leq T^- , T^{+}\leq \vert T \vert$, then $T^-$ and $T^+$ are $KB-$operators. By Proposition 2.9, from \cite{9},  $T$ is $KB-$operators. Similar argument holds for $b-$weakly compact operators, and so by using proposition \ref{2.1},  proof holds.
\end{enumerate}  
 \end{proof} 
 
Theorem \ref{t:2.6} shows that the modulus of the operator $T$ in example of page 3 is not $KB-$operator or $b-$weakly compact operator.\\
  Note that each weakly compact operator is a $KB$-operator but the converse may be false in general. For example, the identity operator $I:L^1 [0,1]\rightarrow L^1 [0,1]$ is a $KB$-operator but is not weakly compact.\\
Let  $E$ and $F$ be two Banach lattices such that the norm of $E^ \prime $ is order continuous. Then,  by using Proposition \ref{2.1} and  \cite [Theorem 2.3]{8},  it is clear that each positive  $KB$-operator $T:E\rightarrow F$ is weakly compact. Now in the following, we show that $B(E,F)=L_{KB}(E,F)$ whenever $E$ has order unit and  order continuous norm.

 \begin{theorem}\label{thm}
 Let $E$ be a normed vectoe lattice and $F$ Dedekind Banach Lattice. By one of the following conditions
 \begin{equation*}
 B(E,F)=L_{KB}(E,F)
 \end{equation*}
 \begin{enumerate} 
\item  $E$ has order unit and  order continuous norm.  
 
\item $E$ and $E^{\prime}$ have order continuous norm and $F$ has Schur property
\end{enumerate} 
  \end{theorem}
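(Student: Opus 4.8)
The plan is to show directly that an arbitrary bounded operator $T\colon E\to F$ is a $KB$-operator; by definition one must check that for every positive increasing sequence $\{x_n\}$ in $B_E$ the image $\{Tx_n\}$ has a norm-convergent subsequence. I would begin with one remark common to both parts: such a sequence $\{x_n\}$ is automatically weakly Cauchy in $E$, because for $0\le f\in E'$ the scalars $\langle f,x_n\rangle$ form an increasing sequence bounded by $\|f\|\sup_m\|x_m\|$, hence convergent, and an arbitrary $f\in E'$ splits as $f^{+}-f^{-}$. Consequently $\{Tx_n\}$ is weakly Cauchy in $F$.

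For part (2), where $F$ has the Schur property, the one extra ingredient I would isolate is that a weakly Cauchy sequence in a Schur space is norm convergent: if $\{y_n\}\subseteq F$ were weakly Cauchy but not norm Cauchy, picking $\varepsilon>0$ and indices $n_k<m_k$ with $\|y_{m_k}-y_{n_k}\|\ge\varepsilon$ gives $y_{m_k}-y_{n_k}\xrightarrow{w}0$, so $\|y_{m_k}-y_{n_k}\|\to 0$ by the Schur property, a contradiction; completeness of $F$ then gives convergence. Applying this to $\{Tx_n\}$ finishes the argument, so $T\in L_{KB}(E,F)$. In fact this uses only the Schur property of $F$; the hypotheses that $E$ and $E'$ have order continuous norm may instead be fed into the known characterization of reflexivity for Banach lattices to conclude that $E$ is reflexive, after which a weakly convergent subsequence $x_{n_k}\rightharpoonup x$ yields $Tx_{n_k}\rightharpoonup Tx$ and the Schur property again converts this to norm convergence.

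For part (1), where $E$ has an order unit $e$ and order continuous norm, I would argue that these hypotheses force every positive increasing norm-bounded sequence of $E$ to be norm convergent. Since $e$ is an order unit, $\sup_n\|x_n\|<\infty$ should give $x_n\le\lambda e$ for a single $\lambda>0$ — equivalently, the given lattice norm is equivalent to the order-unit norm $p_e$ — so that $\{x_n\}$ is order bounded; order continuity of the norm together with Dedekind completeness (automatic for order-continuous Banach lattices) then makes $x_n$ norm convergent. Equivalently, one may observe that an order-unit normed lattice with order continuous norm is finite dimensional (via the $AM$-space-with-unit structure, since an order-continuous $C(K)$ has $K$ finite), so that order intervals of $E$ are norm compact and $\{x_n\}$ has a norm-convergent subsequence outright. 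Either way $\{x_n\}$ is norm Cauchy, hence $\{Tx_n\}$ is norm Cauchy in $F$ and converges because $F$ is complete; thus $T\in L_{KB}(E,F)$.

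The delicate step is entirely in part (1): justifying that a norm-bounded set in $E$ is order bounded by a multiple of $e$, that is, the equivalence of the ambient lattice norm with the order-unit norm. For a Banach lattice this is immediate from the open mapping theorem; for a merely normed vector lattice I would establish it through the structure theory of $AM$-spaces with unit, passing to the completion where order continuity of the norm collapses the space to finite dimensions. Part (2), by contrast, presents no real obstruction once the lemma on weakly Cauchy sequences in Schur spaces is in hand, and I note that Dedekind completeness of $F$ enters only through its completeness.
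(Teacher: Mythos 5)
Your argument is correct at the same level of generality as the paper's own proof (namely, whenever the order-unit norm of $E$ is equivalent to the given norm, which is automatic when $E$ is a Banach lattice), but it takes a genuinely different and in places leaner route. For part (2) the paper extracts a weakly Cauchy subsequence of $\{x_n\}$ from Theorem 4.25 of Aliprantis--Burkinshaw, which is exactly where the order continuity of $E$ and $E'$ is used; you instead observe that any positive increasing norm bounded sequence is already weakly Cauchy (split $f=f^{+}-f^{-}$), so that only the Schur property of $F$ is needed --- a more elementary argument that proves a stronger statement, consistent with the inclusion $L(E,F)=W_b(E,F)\subseteq L_{KB}(E,F)$ when $F$ is Schur. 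For part (1) the paper also passes from norm boundedness to $\{x_n\}\subseteq[-\lambda e,\lambda e]$, but then invokes weak compactness of order intervals, the decomposition $T=T^{+}-T^{-}$ (this is where Dedekind completeness of $F$ and order boundedness of $T$ enter) and the fact that an increasing weakly convergent sequence is norm convergent; you short-circuit all of this by noting that order continuity makes $\{x_n\}$ itself norm convergent (indeed such an $E$, if complete, is an $AM$-space with unit and order continuous norm, hence finite dimensional), so no modulus of $T$ is needed and Dedekind completeness of $F$ plays no role in this part.

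Two caveats. The aside in (2) that order continuity of the norms of $E$ and $E'$ forces reflexivity is false ($c_0$ and its dual $\ell^{1}$ both have order continuous norms, yet $c_0$ is not reflexive); fortunately your main argument does not use it. More seriously, the step you correctly single out as delicate --- norm bounded implies order bounded by a multiple of $e$ --- cannot be repaired by ``passing to the completion'' as you propose: for an incomplete normed lattice the order unit, the order continuity of the norm, and the norm equivalence need not survive completion. A concrete witness is $E=L^{\infty}[0,1]$ equipped with the $L^{2}$-norm: it has order unit $\mathbf{1}$ and order continuous norm, yet it is infinite dimensional, its completion $L^{2}[0,1]$ has no order unit, and norm bounded increasing sequences in $E$ need not be order bounded. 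The paper's proof silently assumes the same equivalence of $\|\cdot\|_{\infty}$ with the given norm, so this gap is shared with, not introduced by, your argument; but as written both proofs really establish part (1) only when the given norm of $E$ is equivalent to its order-unit norm, e.g.\ when $E$ is a Banach lattice.
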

 \begin{proof}
 \begin{enumerate} 
\item
Let $T\in B(E,F)$ and $x_{n}\uparrow$  and  $\mathop{\text{sup}}_{n} \|x_{n}\|<\infty  $. Let $e \in E^{+}$ be an order unit for $E$. For each $x \in E$, the norm 
 \begin{equation*}
   \|x\|_{\infty}=\inf \lbrace\lambda >0 : \vert x \vert \leq \lambda e\rbrace
 \end{equation*}
 is equivalent to the original norm $\Vert .\Vert$ of $E$. It follows that 
\begin{equation*}
 \mathop{\text{sup}}_{n} \Vert x_{n}\Vert_{\infty}=\mathop{\text{sup}}_{n} \|x_{n}\Vert.
 \end{equation*}
  For each $n\in \mathbb{N} $, there exists $\lambda_n>0$ such that $\lambda_{n} \leq \Vert x_{n}\Vert_{\infty}+1$ where $\vert x_{n}\vert \leq \lambda_{n}e$. It follows that 
\begin{equation*}
0<\lambda=\text{sup} \lambda_{n} \leq \mathop{\text{sup}}_{n} \Vert x_{n}\Vert_{\infty}+1 < \mathop{\text{sup}}_{n} \Vert x_{n}\Vert +1 <\infty.
\end{equation*}
Then $\lbrace x_{n}\rbrace \subseteq [- \lambda e , \lambda e]$. Since $E$ has order continuous norm, by using theorem 4.9 from \cite{1}, it follows that $ [- \lambda e , \lambda e]$ is weakly compact. On the other hand, since $E$ has order unit, $T$ is order bounded, and so $T^+$ exists. Thus $T^{+}( [- \lambda e , \lambda e])$ is weakly compact, since $T^{+}$ is weak to weak continuous. As $\lbrace T^{+}x_{n}\rbrace \subseteq  T^{+}( [- \lambda e , \lambda e]) $, there is subsequence $ \lbrace T^{+}x_{n_{j}} \rbrace$ from $ \lbrace T^{+}x_{n} \rbrace$ which is weak convergent to some point $z \in F$. Since $(T^{+}x_{n})_{n}$ is an increasing sequence, then by Theorem 1.4.1 from \cite{10}, $T^{+}x_{n}$ is norm convergent to $z$ in $F$. Thus $T^{+}\in L_{KB}(E,F)$.
By using equality $T=T^{+}-T^{-}$, we conclude that $T \in L_{KB}(E,F)$.
\item
Let $T \in B(E,F)$ and $(x_{n}) \subseteq E$ be increasing positive sequence where $$ \mathop {\text{sup}}_{n} \Vert x_{n} \Vert <+\infty.$$ By Theorem 4.25 from \cite{1}, there is a weakly Cauchy subsequence $\lbrace x_{n_{j}} \rbrace$  in $E$. It follows that $\lbrace Tx_{n_{j}} \rbrace$ is weakly Cauchy in $F$. Since $F$ has Schur property, $\lbrace Tx_{n_{j}} \rbrace$ is norm Cauchy in $F$, and so   convergence in $F$. Thus $T \in L_{KB}(E,F)$.
\end{enumerate} 
\end{proof}

 \begin{theorem}\label{2.6}
 Assume that an order bounded operator $T:E \rightarrow F$ between two Banach lattices has preserves disjointness. Then $T \in W_{b}(E,F)$ if and only if $T \in L_{KB}(E,F)$.
  \end{theorem}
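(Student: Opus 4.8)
The plan is to reduce the nontrivial implication to the known characterization of $b$-weakly compact operators via disjoint sequences (the Alpay–Altin criterion recalled in the introduction), exploiting that $T$ preserves disjointness to replace a norm-bounded $b$-order bounded disjoint sequence by one lying in a principal ideal where $KB$-behavior is controllable. One direction is free: every $b$-weakly compact operator is a $KB$-operator by the chain of inclusions $W_b(E,F)\subseteq L_{KB}(E,F)$ stated in Section 2, so I would dispatch that in one line. For the converse, suppose $T\in L_{KB}(E,F)$ and, aiming at the disjoint-sequence characterization, let $\{x_n\}\subseteq E^+$ be a disjoint sequence that is $b$-order bounded, say $0\le x_n\le u$ in $E^{\sim\sim}$ for some $u\in (E^{\sim\sim})^+$; equivalently (by Theorem 2.10 of \cite{6} as used in Example \ref{2.2}) it suffices to show $\{Tx_n\}$ is norm convergent for every positive increasing sequence in $B_E$, so I would instead take $\{x_n\}$ positive increasing with $\sup_n\|x_n\|<\infty$ and show $\{Tx_n\}$ is norm convergent (not merely that it has a convergent subsequence).

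The key step is upgrading ``has a norm-convergent subsequence'' to ``is norm convergent.'' Here I would use disjointness-preservation of $T$ together with order boundedness of $T$: an order bounded, disjointness-preserving operator between Banach lattices has a modulus $|T|$ which is a lattice homomorphism (Abramovich's theorem; see \cite{1}), and moreover $|Tx| = |T|\,|x|$ for all $x$ and $|T|$ is positive. Thus for a positive increasing sequence $\{x_n\}$ the sequence $\{|T|x_n\}$ is positive increasing and norm bounded in $F$, and the estimate $\||Tx_n| - |Tx_m|\| \le \||T|(x_n - x_m)\| = \||T|x_n - |T|x_m\|$ (for $n\ge m$, using $|\,|a|-|b|\,|\le|a-b|$ and positivity of $|T|$) shows $\{Tx_n\}$ is norm Cauchy as soon as $\{|T|x_n\}$ is. So it reduces to showing $|T|\in L_{KB}(E,F)$, i.e. $|T|\in W_b(E,F)$ by Proposition \ref{2.1} since $|T|$ is positive; and by Theorem \ref{t:2.6}(2) this follows once we know $|T|=|\,|T|\,|$ is a $KB$-operator — which is exactly what we must extract from $T\in L_{KB}(E,F)$. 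The bridge is: since $|Tx_n|=|T|x_n$ for the positive elements $x_n$, a norm-convergent subsequence of $\{Tx_n\}$ forces norm convergence of the corresponding subsequence of $\{|T|x_n\}$, and an increasing positive sequence in a Banach lattice with a norm-convergent subsequence is itself norm convergent (the limit is the supremum). Hence $|T|x_n$ converges, giving $|T|\in L_{KB}(E,F)$, and we close the loop.

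I would therefore organize the write-up as: (i) the trivial direction via the inclusion chain; (ii) reduce the converse to positive increasing norm-bounded sequences via Theorem 2.10 of \cite{6}; (iii) invoke Abramovich's theorem to get $|Tx|=|T|\,|x|$ and $|T|$ positive, hence the Cauchy estimate above; (iv) show $|T|\in L_{KB}(E,F)$ using that increasing positive sequences with a convergent subsequence converge; (v) conclude $\{Tx_n\}$ is norm Cauchy, hence convergent, so $T\in W_b(E,F)$. The main obstacle is step (iii): one must be careful that $T$ is genuinely order bounded and disjointness-preserving in the sense needed for the modulus formula $|T|\,|x| = |Tx|$ to hold — this is where the hypothesis ``order bounded operator that preserves disjointness'' is doing the real work, and the argument is vacuous without it (indeed the operator of Example \ref{2.2} is a $KB$-operator that is not $b$-weakly compact, and it is neither positive nor, one checks, disjointness-preserving in the requisite way).
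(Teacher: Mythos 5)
Your proposal is correct and takes essentially the same route as the paper: both rest on Meyer's theorem (Theorem 2.40 of \cite{1}) for order bounded disjointness preserving operators, which yields the modulus formula $|T|x=|Tx|$ for $x\ge 0$, then transfer the $KB$/$b$-weakly compact property between $T$ and the positive operator $|T|$ and close with Proposition \ref{2.1}; you simply spell out the transfer steps (increasing sequences with convergent subsequences converge, etc.) that the paper leaves implicit. One cosmetic fix: the left-hand side of your Cauchy estimate should be $\|Tx_n-Tx_m\|$, bounded by $\||T|(x_n-x_m)\|=\||T|x_n-|T|x_m\|$ via $|T(x_n-x_m)|\le |T|(x_n-x_m)$, rather than $\||Tx_n|-|Tx_m|\|$, since controlling the moduli alone does not give Cauchyness of $\{Tx_n\}$.
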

 \begin{proof}
 By using Theorem 2.40 from \cite{1}, $T$ is $KB-$operator if and only if $\vert T \vert$ is $KB-$operator, 
 and by proposition {2.1} from  \cite{9} it is equivalent to that $\vert T \vert$ is $b-$weakly compact operator.
  Another using Theorem 2.40 \cite{1} shows that $T$ is $b-$weakly compact and proof follows.
 \end{proof}

Let $E$ and $F$ be Banach lattices and let $T\in L_{KB}(E,F)$. By Proposition 3.4 from \cite{9d},  $\{Tx_n\}_n$ has an order convergence subsequence in $F$ for every positive increasing sequence $\{x_n\}_n$ in the closed unit ball $B_E$ of $E$. Now, in the following we introduce two  classifications of operators which are generalization of order and strongly order continuous operators and we establish the relationships between them and positive $KB-$operators.

\begin{definition}
Let $E$ and $F$ be vector lattice.  $L^{(1)}_c(E,F)$ (resp. $L^{(2)}_c(E,F)$) is the collection of operators $T\in L_b(E,F)$, which $x_n\xrightarrow{o_1}0$ ($x_n\xrightarrow{o_2}0$) \text{implies} $Tx_{n_k} \xrightarrow{o_1}0$ (resp. $Tx_{n_k} \xrightarrow{o_2}0$)
\text{whenever} $\{x_{n_k}\}$ is a subsequence of $\{x_{n}\}$.
\end{definition}
In \cite{1b}, there are some examples which shows that two classifications of operators $L^{(1)}_c(E,F)$ and $L^{(2)}_c(E,F)$ are different.

\begin{theorem}\label{2.8}
Let $E$, $F$ be a Banach lattices and $E$ has order continuous norm. Then
\begin{enumerate}
\item  $W_b(E,F)^+= L_{KB}(E,F)^+ \subseteq L^{(2)}_c(E,F)$.
\item If $L_{KB}(E,F)$  is vector lattice and  $F$  Dedekind complete, then $L_{KB}(E,F)$ is an ideal in $ L^{(1)}_c(E,F)= L^{(2)}_c(E,F)$.
\end{enumerate}
\end{theorem}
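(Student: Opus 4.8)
The plan is to establish both parts by reducing to order convergence of increasing sequences, using the characterization of $b$-weakly compact (equivalently $KB$-) operators via positive increasing sequences in $B_E$ together with the order continuity of the norm of $E$. For part (1), since a positive $KB$-operator and a positive $b$-weakly compact operator coincide by Proposition \ref{2.1}, the equality $W_b(E,F)^+ = L_{KB}(E,F)^+$ is immediate; the substance is the inclusion into $L^{(2)}_c(E,F)$. First I would take $0\le T\in L_{KB}(E,F)$ and a sequence $x_n\xrightarrow{o_2}0$ in $E$; so there is a net $y_\beta\downarrow 0$ with $|x_n|\le y_\beta$ for $n\ge n_0(\beta)$. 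Because $E$ has order continuous norm, $\|y_\beta\|\to 0$, hence $\{x_n\}$ is norm bounded, and more importantly the positive sequences built from the $|x_n|$ are order bounded. Passing to $u_n = \sup_{k\le n}|x_k|$ (which requires $F$, or rather $E$, Dedekind complete enough to form finite suprema — finite suprema always exist in a vector lattice), one gets an increasing norm-bounded sequence to which the $KB$-property applies, yielding a norm-convergent subsequence of $\{Tu_n\}$; combined with $0\le |Tx_{n_k}|\le T u_{n_k}$ and order continuity of the norm in $F$, I would extract an order-convergent (in the $o_2$ sense) subsequence $Tx_{n_k}\xrightarrow{o_2}0$. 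The key technical point is converting the strong-order-null hypothesis into a genuinely increasing, norm-bounded sequence to feed into the $KB$-definition, and then converting norm convergence of that auxiliary sequence back into $o_2$-convergence of a subsequence of $\{Tx_{n_k}\}$ via domination.

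For part (2), assume $L_{KB}(E,F)$ is a vector lattice and $F$ is Dedekind complete. By Proposition \ref{2.1} and the hypothesis that $L_{KB}(E,F)$ is a lattice, $L_{KB}(E,F) = W_b(E,F)$ and it is generated by its positive cone, so part (1) applied to both cones gives $L_{KB}(E,F)\subseteq L^{(2)}_c(E,F)$. To see it lands in $L^{(1)}_c(E,F)$ as well, I would note that order convergence ($o_1$) implies strong order convergence ($o_2$), as remarked in the introduction, so running the argument of part (1) with an $o_1$-null sequence produces an $o_2$-null image subsequence; but since the dominating sequence can be taken genuinely decreasing to $0$ (an honest sequence, not a net) when starting from $o_1$-convergence, the extracted subsequence is in fact $o_1$-null, giving $T\in L^{(1)}_c(E,F)$. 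The coincidence $L^{(1)}_c(E,F) = L^{(2)}_c(E,F)$ under these hypotheses should follow from the same order-continuity mechanism: with $F$ Dedekind complete and $E$ order continuous, the gap between the two notions of order convergence for the relevant sequences collapses; I would cite or reprove this using the domination argument above.

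The ideal property is the last step: I must show that if $T\in L_{KB}(E,F)$, $S\in L_c^{(1)}(E,F)$, and $0\le |S|\le |T|$, then $S\in L_{KB}(E,F)$. Here I would invoke Theorem \ref{t:2.6}(2): it suffices to show $|S|\in L_{KB}(E,F)$, and since $0\le |S|\le |T|$ with $|T|\in L_{KB}(E,F)^+$ (using that $L_{KB}(E,F)$ is assumed a vector lattice, so $|T|$ is a $KB$-operator), the domination $\{|S|x_n\}\le\{|T|x_n\}$ for positive increasing norm-bounded $\{x_n\}$ together with order continuity of the norm of $F$ transfers the $KB$-property from $|T|$ to $|S|$ by the same subsequence-extraction-and-domination argument used in part (1).

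The main obstacle I anticipate is the careful bookkeeping in part (1): one must manufacture from a strong-order-null sequence an increasing, norm-bounded sequence eligible for the $KB$-definition, obtain a norm-convergent subsequence, and then argue that the limit is $0$ and that convergence upgrades to $o_2$-convergence — this last upgrade is exactly where order continuity of the norm of $F$ (not merely of $E$) is essential, and getting the quantifiers in the definition of $\xrightarrow{o_2}$ to line up with the extracted subsequence is the delicate part. Everything else is a routine consequence of Proposition \ref{2.1}, Theorem \ref{t:2.6}, and the domination inequalities.
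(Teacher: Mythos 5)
Your reduction of part (1) to an increasing norm-bounded sequence is the right instinct, but the specific sequence you chose breaks the argument. You set $u_n=\sup_{k\le n}|x_k|$ and apply the $KB$-property to get a norm-convergent subsequence of $\{Tu_n\}$ with some limit $z$; but $\{u_n\}$ is increasing, so $z$ is in general \emph{not} zero, and the only domination you have, $|Tx_{n_k}|\le T|x_{n_k}|\le Tu_{n_k}$, therefore gives no smallness of $Tx_{n_k}$ at all (note also $|x_n|$ is not dominated by the differences $u_n-u_{n-1}$, since $u_n=u_{n-1}$ is possible with $|x_n|$ large, so no telescoping is available either). The paper's construction avoids this: from $x_n\xrightarrow{o_2}0$ and order continuity of the norm of $E$ one gets $\|x_n\|\to 0$ (you only extracted norm boundedness, which is too weak), one then picks a subsequence with $\sum_j\|x_{n_j}\|<\infty$ and forms the partial sums $y_m=\sum_{j=1}^m x_{n_j}$, which are increasing and norm bounded; now $Tx_{n_m}=Ty_m-Ty_{m-1}$, so after $\{Ty_m\}$ converges in norm to $z$, the telescoping estimate $0\le Tx_{n_{m_k}}\le|Ty_{m_k}-z|+|Ty_{m_{k'}}-z|$ does yield $o_2$-convergence to $0$ along a subsequence, once one knows that a norm-convergent sequence in a Banach lattice has a strongly order convergent subsequence. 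That last fact (cited from Gao--Xanthos) is what replaces your repeated appeal to ``order continuity of the norm of $F$'': the theorem does not assume $F$ has order continuous norm, so an argument that needs it is proving a different (weaker) statement.

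For part (2) you have the right ingredients in outline (decomposition $T=T^+-T^-$ using that $L_{KB}(E,F)$ is a lattice, and the domination/ideal property), but the identification $L^{(1)}_c(E,F)=L^{(2)}_c(E,F)$ is left as ``should follow from the same mechanism''; the paper gets it from Theorem 1.7 of Abramovich--Sirotkin together with the positive-part decomposition, and gets the ideal property from the domination result for $KB$-operators (Proposition 2.9 of \cite{9}), again with no order continuity of $F$ needed — for positive operators dominated by a $b$-weakly compact operator one can use the disjoint-sequence characterization, so your extra hypothesis is avoidable there too. In short: the skeleton is similar, but the central step of part (1) as you wrote it fails, and the fix is the summable-subsequence/partial-sum telescoping argument rather than running suprema.
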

\begin{proof}
\begin{enumerate}
\item Let $T$ be a positive $KB-$operator and $\{x_n\}\subset E$  strongly order convergence in $E$. Without lose generality, we set $0\leq x_n\xrightarrow{o_2} 0$, which follows $\{x_n\}$ is norm convergent to zero. Set $\{x_{n_j}\}$ as subsequence with $\sum_{k=1}^{+\infty}\Vert x_{n_j} \Vert< +\infty$. Define $y_m=\sum_{j=1}^m  x_{n_j}$. Then $0\leq y_m\uparrow$ and $\text{sup}_m\Vert y_m\Vert<\infty$. Since $T$ is $KB-$operator, $\{Ty_m\}$ is norm convergent to some point $z\in F$. Now by \cite{9g}, page 7, it has a subsequence as $\{Ty_{m_k}\}$ which  is strongly order convergent to $z\in F$. Thus there is $\{z_\beta\}\subset F^+$ and that for each $\beta$ there exists $n_0$ $\vert Ty_{m_k}-z\vert\leq z_\beta\downarrow 0$  whenever  $k\geq n_0$. If we set $n_0\leq k'\leq k$, then  we  the following inequalities
\begin{align*}
0&\leq Tx_{n_{m_k}}\leq \vert Ty_{m_k}-Ty_{m_{k'}}\vert\leq
\vert Ty_{m_k}-z\vert +\vert Ty_{m_{k'}}-z\vert\\
&\leq z_\beta+z_\beta\downarrow 0, 
\end{align*}
 shows that $T\in L^{(2)}_c(E,F)$ and proof immediately follows. 
 \item Since $L_{KB}(E,F)$ is a vector lattice,   by equality $T=T^+-T^-$ and Theorem 1.7 from \cite{1b}, we have $ L^{(2)}_c(E,F)=L^{(1)}_c(E,F)$ and by part (1), $L_{KB}(E,F)$ is a subspace of $ L^{(1)}_c(E,F)$. By Proposition 2.9, from \cite{9}, it is also obvious that   $L_{KB}(E,F)$ is an ideal in $ L^{(1)}_c(E,F)$.

\end{enumerate}
\end{proof}

\noindent {\bf Question.} By conditions Theorem \ref{2.8}, Is  $L_{KB}(E,F)$ a band in $ L^{(1)}_c(E,F)=L^{(2)}_c(E,F)$?

\begin{definition}
\begin{enumerate}  
\item An operator $T:E\rightarrow F$ between two normed vector lattice is unbounded $b$-weakly compact if $\{Tx_n\}$ is $un-$convergent for every positive increasing sequence $\{x_n\}_n$ in the closed unit ball $B_E$ of $E$.
\item An operator $T:E\rightarrow F$ between two normed vectoe lattice is unbounded $KB$-operator if  $\{Tx_n\}$ is $un-$convergent for every positive increasing sequence $\{x_n\}_n$ in the closed unit ball $B_E$ of $E$.
\end{enumerate}
\end{definition}
For normed vector lattices $E$ and $F$, the collection of unbounded\\ $KB-$operators (resp. $b-$weakly compact operators) will be denoted by\\ $L_{UKB}(E,F)$ (resp. $UW_b(E,F)$).\\
As example of page 3,  two classifications of the above operators are different.
 If a Banach lattice $F$ has strong unit, by using Theorem 2.3  \cite{10a}, we have  $L_{KB}(E,F)=L_{UKB}(E,F)$ and  $W_b(E,F)=UW_b(E,F)$.  

It is clear that every $KB-$operators (or $b-$weakly compact operators) are unbounded  $KB-$operators (or $b-$weakly compact operators), but the following example shows that  the converse in general, not holds.
   
\begin{example}\label{Ex:12}
Let $I_{c_0}$ be an identity mapping from $c_0$ into itself.  Then $I_{c_0}$ is an unbounded $KB$-operator and unbounded $b$-weakly compact operator.  But $I_{c_0}$ is not $KB$-operator or $b$-weakly compact operator.
\end{example}
Let $X$ be a Banach space, $F$ a Banach lattice, and $T\in L(X,F)$. We say that $T$ is (sequentially) $un-$compact, if for every bounded net $\{x_\alpha\}$ (resp. $\{x_n\}$ its image has a subnet (resp. subsequence) which is $un-$convergent. The collections of $un-$compact (resp. sequentially $un-$compact) will be denoted by $KU(X,F)$ (resp. $K^\sigma U(X,F)$), that is 
\begin{align*}
KU(X,F)&=\{T:X\rightarrow F\mid~T~\text{is} ~un-\text{convergent}\},\\
K^\sigma U(X,F)&=\{T:X\rightarrow F\mid~T~\text{is}~\text{sequentially} ~un-\text{convergent}\}.
\end{align*}
It is clear that $ K^\sigma U(E,F)\subseteq L_{UKB}(E,F)$ where $E$ and $F$ are Banach lattices.  As  example of page 3, this inclusion may be proper.

\begin{theorem}
Let $I$ be an ideal in Banach lattice $E$ and $T\in B(E,F)$, where $F$ is Banach lattice. If $T\in L_{KB}(I,F)$ is surjective homomorphism, then $T\in L_{UKB}(E,F)$. 
\end{theorem}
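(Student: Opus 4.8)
The plan is to unwind the definitions and exploit the fact that $T$ restricted to the ideal $I$ is already a $KB$-operator with a useful surjectivity hypothesis. Let $\{x_n\}$ be a positive increasing sequence in the closed unit ball $B_E$ of $E$; the goal is to produce a subsequence $\{x_{n_k}\}$ such that $\{Tx_{n_k}\}$ is $un$-convergent in $F$. First I would use the surjectivity of $T$ as a lattice homomorphism from $I$ onto $F$: since lattice homomorphisms preserve suprema, infima, and disjointness, the image $T(I)$ is an ideal (in fact all of $F$), and the $un$-topology of $F$ can be probed through elements of the form $Tu$ with $u\in I^+$. So to check $|Tx_{n_k}-y|\wedge v \xrightarrow{\|\cdot\|}0$ for all $v\in F^+$, it suffices to check it for $v=Tu$, $u\in I^+$, because $T$ is surjective.

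Next I would relate $x_n$ in $E$ to something living in $I$. Fix $u\in I^+$. Then $\{x_n\wedge u\}_n$ is a positive increasing sequence lying in $I$ (since $I$ is an ideal and $0\le x_n\wedge u\le u\in I$), and it is norm bounded because $\|x_n\wedge u\|\le\|u\|$. Applying the hypothesis $T\in L_{KB}(I,F)$ to this sequence — together with a diagonal argument over a countable family of $u$'s, or more cleanly by working with one $u$ at a time and using that $KB$ gives a norm-convergent subsequence — I get a subsequence along which $\{T(x_n\wedge u)\}$ converges in norm in $F$. The key identity is that, because $T$ is a lattice homomorphism on $I$, $T(x_n\wedge u)=Tx_n\wedge Tu$ whenever $x_n\in I$; more to the point, the quantity controlling $un$-convergence, $|Tx_{n_k}-Tx_{m_k}|\wedge Tu$, can be estimated by $T(|x_{n_k}-x_{m_k}|\wedge u')$ for a suitable $u'\in I$ via homomorphism properties, reducing the $un$-Cauchy condition for $\{Tx_{n_k}\}$ tested against $Tu$ to the norm-Cauchy condition for a sequence in $F$ coming from a norm-bounded increasing sequence in $I$, which $T\in L_{KB}(I,F)$ handles.

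Concretely, the steps in order are: (1) reduce $un$-convergence of $\{Tx_{n_k}\}$ to norm-convergence of $\{|Tx_{n_k}-Tx_{m_k}|\wedge Tu\}$ over $u\in I^+$, using surjectivity; (2) for fixed $u$, pass the increasing norm-bounded sequence $\{x_n\wedge u\}\subseteq I^+$ through $T\in L_{KB}(I,F)$ to extract a subsequence on which $\{T(x_n\wedge u)\}$ is norm convergent, hence norm Cauchy; (3) use the lattice-homomorphism identities ($T(a\vee b)=Ta\vee Tb$, $T(a\wedge b)=Ta\wedge Tb$ for $a,b\in I$, and $|Ta|=T|a|$) to bound $|Tx_{n_k}-Tx_{m_k}|\wedge Tu$ by $T(|x_{n_k}-x_{m_k}|\wedge u)$ up to controllable terms, so that the $un$-Cauchy estimate follows from step (2); (4) run the usual diagonalization so that one subsequence works simultaneously for a sequence $\{u_j\}$ dense enough in $I^+$ to control all of $F^+$ (using that $un$-convergence need only be verified on a sufficient set of positive vectors), and pass to the limit using completeness of $F$.

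The main obstacle I anticipate is step (3): making the passage from $x_n\in E$ (which need not lie in $I$) to $x_n\wedge u\in I$ genuinely compatible with the lattice-homomorphism identities, since those identities for $T$ are only available on $I$, not on all of $E$. The clean way around this is that $un$-convergence in $F$ tested against $Tu$ only ever "sees" the band generated by $Tu$, and $|Tx_n|\wedge Tu$ depends on $x_n$ only through its component in the band generated by $u$ after applying $T$; one must argue that $|Tx_n - y|\wedge Tu = |T(x_n\wedge u) - y|\wedge Tu$ (or an inequality in that spirit) so that the whole computation can be transported into $I$ where $T$ behaves well. Verifying this reduction carefully — essentially that the $un$-null ideal behavior is governed entirely by the restriction of $T$ to $I$ because $T(I)=F$ — is the crux; once it is in place, steps (1)(2)(4) are routine diagonalization and completeness arguments.
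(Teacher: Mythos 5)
Your overall strategy coincides with the paper's: intersect $x_n$ with elements $u\in I^+$, feed the increasing norm-bounded sequence $\{x_n\wedge u\}\subset I$ into $T|_I\in L_{KB}(I,F)$, and use surjectivity to test $un$-convergence of $\{Tx_n\}$ only against vectors of the form $Tu$. But your step (3) contains a genuine gap. The hypothesis is that $T\in B(E,F)$ itself is the (surjective) lattice homomorphism — that is exactly how the paper uses it — so the identity $T(x_n\wedge u)=Tx_n\wedge Tu$ is available for $x_n\in E$ and $u\in I$ with no transport argument needed; by treating the homomorphism property as available only on $I$ you are forced into the substitute identity $|Tx_n-y|\wedge Tu=|T(x_n\wedge u)-y|\wedge Tu$, which is false: in $F=\mathbb{R}$ with $T=\mathrm{id}$, $u=1$, $x_n=2$, $y=2$ the left side is $0$ and the right side is $1$. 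Moreover, once $T$ is a homomorphism on $E$ it is positive, so $\{T(x_n\wedge u)\}$ is increasing and a single convergent subsequence already forces convergence of the whole sequence; your diagonalization over a countable family $\{u_j\}$ ``dense enough in $I^+$'' is both unnecessary and unjustified, since a general $F^+$ admits no countable set that suffices to test $un$-convergence.

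The second, deeper problem is your step (4) endgame: from norm convergence of $Tx_{n}\wedge w$ for every $w\in F^+$ you cannot ``pass to the limit using completeness of $F$,'' because norm completeness does not make the $un$-topology complete and does not produce a candidate $un$-limit. Concretely, $a_n=\sum_{k\le n}e_k$ in $c_0$ is increasing, norm bounded, $un$-Cauchy, and $a_n\wedge w$ is norm convergent for every $w\in c_0^+$, yet $a_n$ has no $un$-limit in $c_0$. So establishing the $un$-Cauchy condition (which is all your reduction can deliver) does not finish the proof. To be fair, the paper's own argument compresses this last passage into ``and proof follows,'' but as written your proposal does not close it either, and the false identity in step (3) plus the appeal to completeness in step (4) are concrete errors rather than omitted routine details.
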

\begin{proof}
Let $\{x_n\}$ be a positive increasing sequence in $E$ with $\text{sup}_n\Vert x_n \Vert<\infty$ and let $x\in I$. Then $ x_n\wedge x\in I$, $ 0\leq x_n\wedge x\uparrow$ and $\text{sup}_n\Vert x_n\wedge x \Vert\leq\Vert x_n\Vert<\infty$. Since $T\in L_{KB}(I,F)$, there is a subsequence $\{x_{n_j}\}$ which $\{T(x_{n_j}\wedge x)\}$  is convergent for each $x\in I$. As $T$ is homomorphism and surjective,  $\{T(x_{n_j})\wedge y\}$ is convergent for all $y\in F$ and proof follows.
\end{proof}

\begin{theorem}
Let $E$ be a $KB-$space and $T\in B(E,F)$ be an surjective homomorphism, where $F$ is Banach lattice. Then $T\in L_{UKB}(E,F)$
\end{theorem}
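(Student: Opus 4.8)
The plan is to notice that the conclusion follows already from $E$ being a $KB$-space, so that the hypotheses that $T$ is a surjective homomorphism are in fact not needed. First I would unwind the definition of $L_{UKB}(E,F)$: one must show that for every positive increasing sequence $\{x_n\}$ in the closed unit ball $B_E$, the image sequence $\{Tx_n\}$ is $un$-convergent in $F$. So fix such a sequence, i.e. $0\leq x_n\uparrow$ with $\|x_n\|\leq 1$ for all $n$.

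Since $\{x_n\}$ is an increasing norm bounded sequence in $E^+$ and $E$ is a $KB$-space, $\{x_n\}$ is norm convergent, say $x_n\xrightarrow{\|\cdot\|}x$ for some $x\in E$ (automatically $x\in E^+$, as the positive cone is closed). Because $T$ is bounded, $\|Tx_n-Tx\|\leq \|T\|\,\|x_n-x\|\to 0$, so $\{Tx_n\}$ is norm convergent to $Tx$ in $F$. It remains to pass from norm convergence to $un$-convergence: for every $u\in F^+$ we have $0\leq |Tx_n-Tx|\wedge u\leq |Tx_n-Tx|$, hence $\big\||Tx_n-Tx|\wedge u\big\|\leq \|Tx_n-Tx\|\to 0$, that is $Tx_n\xrightarrow{un}Tx$. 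Thus $\{Tx_n\}$ is $un$-convergent, which is exactly $T\in L_{UKB}(E,F)$.

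There is essentially no obstacle here; the only point worth flagging is that, in the write-up, one should either keep the statement as is (the homomorphism/surjectivity hypotheses being harmless but superfluous) or observe that $E$ being a $KB$-space forces $B(E,F)=L_{KB}(E,F)$, and then invoke the already noted inclusion $L_{KB}(E,F)\subseteq L_{UKB}(E,F)$ to conclude immediately. Either route gives a complete, short proof.
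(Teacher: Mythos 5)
Your proof is correct, but it is genuinely different from the paper's argument. You exploit the definition of $L_{UKB}(E,F)$ exactly as stated: the test sequences lie in the closed unit ball, so they are norm bounded, the $KB$-property of $E$ makes $x_n$ norm convergent, boundedness of $T$ transfers this to $\{Tx_n\}$, and the estimate $\bigl\||Tx_n-Tx|\wedge u\bigr\|\leq\|Tx_n-Tx\|$ turns norm convergence into $un$-convergence. This shows in fact that $B(E,F)=L_{KB}(E,F)\subseteq L_{UKB}(E,F)$ whenever $E$ is a $KB$-space, so the lattice homomorphism and surjectivity hypotheses are indeed redundant for the statement as written. The paper argues differently: it takes an increasing positive sequence $\{x_n\}$ \emph{without} using its norm bound, truncates it by an arbitrary $x\in E^+$, applies the $KB$-property to the bounded sequences $\{x_n\wedge x\}$, uses the homomorphism identity $T(x_n\wedge x)=Tx_n\wedge Tx$ together with surjectivity to get norm convergence of $Tx_n\wedge y$ for every $y\in F^+$, and from this infers $un$-convergence. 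That route aims at a formally stronger conclusion ($un$-convergence of $Tx_n$ for arbitrary, not necessarily norm bounded, increasing positive sequences), which is where the homomorphism hypotheses actually do work; on the other hand its last step (producing the $un$-limit of $Tx_n$ from convergence of the truncations) is left implicit, whereas your argument is complete and shorter for the theorem as stated.
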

\begin{proof}
Let $\{x_n\}\subset E^+$ be increasing sequence. Since  for each $x\in E^+$ $\text{sup}\Vert x_n\wedge x\Vert<\infty$, it follows $\{x_n\wedge x\}$ is norm  convergent, and so $\{T(x_n\wedge x)\}$ is norm convergent for each $x\in E^+$. As $T$ is surjective homomorphism, proof follows.
\end{proof}

\begin{theorem}
Let $E$ has order continuous norm and atomic and $F$ normed lattice. Then 
$$W_{KB}(E,F)\subseteq L_{UKB}(E,F).$$
\end{theorem}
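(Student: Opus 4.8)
The plan is to show directly that an arbitrary $T\in W_{KB}(E,F)$ sends every positive increasing sequence $\{x_{n}\}$ of $B_{E}$ to a sequence $\{Tx_{n}\}$ that is $un$-convergent in $F$. So fix such a $\{x_{n}\}$ and, using that $T$ is a $WKB$-operator, extract a subsequence $\{x_{n_{k}}\}$ with $Tx_{n_{k}}\xrightarrow{w}y$ for some $y\in F$; the task is then to upgrade this to $|Tx_{n}-y|\wedge v\xrightarrow{\|.\|}0$ for every $v\in F^{+}$. The positive case is easy and uses neither hypothesis on $E$: if $T\geq 0$ then $\{Tx_{n}\}$ is increasing, so testing against positive functionals shows that the subsequential weak limit is already the weak limit of the whole sequence, $Tx_{n}\xrightarrow{w}y$, with $Tx_{n}\leq y$; then $0\leq y-Tx_{n}\downarrow$ and is weakly null, hence $0$ lies in the norm closed convex hull of $\{y-Tx_{n}\}$, and monotonicity turns any convex combination into a lower bound for a single tail term, forcing $\|y-Tx_{n}\|\to 0$ (this is anyway contained in Proposition \ref{2.1}). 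Thus the content of the theorem is the case of a general, possibly non-positive, $T$, and this is where ``$E$ atomic'' and ``$E$ order continuous'' enter.

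For general $T$ I would split the increasing sequence using the atomic structure. Let $(e_{i})$ be a maximal disjoint family of atoms of $E$; since each $x_{n}$ has countable carrier we may assume the relevant atoms are $(e_{i})_{i\geq 1}$, and let $P_{N}$ be the band projection of $E$ onto $\mathrm{span}\{e_{1},\dots,e_{N}\}$. For each fixed $N$ the sequence $\{P_{N}x_{n}\}_{n}$ is increasing and norm bounded in the finite dimensional sublattice $P_{N}(E)$, hence norm convergent, so $\{TP_{N}x_{n}\}_{n}$ is norm convergent, in particular $un$-convergent, as $n\to\infty$. For the complementary tails one invokes order continuity of the norm: for every $u\in E^{+}$ there is $w_{N}\in E^{+}$ with $w_{N}\downarrow 0$ such that $(x_{n}-P_{N}x_{n})\wedge u\leq w_{N}$ for all $n$ (take for $w_{N}$ the part of the coordinatewise supremum of $\{x_{n}\}$ beyond the $N$-th atom, truncated by $u$), whence $\|w_{N}\|\to 0$. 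In particular $\{x_{n}\}$ is $un$-Cauchy in $E$, and the problem reduces to showing that the tail images $T(x_{n}-P_{N}x_{n})$ become small, uniformly in $n$, modulo every $v\in F^{+}$, as $N\to\infty$.

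This last step is the one I expect to be the real obstacle. For each $N$ the family $\{x_{n}-P_{N}x_{n}\}_{n}$ is again a positive increasing sequence in $B_{E}$, each term disjoint from $P_{N}(E)$. The idea is to feed this tail sequence — and suitable subsequences of its successive increments — back into the $WKB$-hypothesis, obtain weak cluster values in $F$, and play them against the already fixed weak limit $y$ of $\{Tx_{n_{k}}\}$ so as to reduce, as in the positive case, to controlling a positive, monotone, weakly null configuration in $F$; once in that situation the same Mazur/convexity argument (valid in any normed lattice, with no order continuity of $F$ needed) converts weak nullity into norm nullity and gives $\sup_{n}\bigl\|\,|T(x_{n}-P_{N}x_{n})|\wedge v\,\bigr\|\to 0$ as $N\to\infty$ for each $v\in F^{+}$. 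Combining this with the norm convergence of the heads $\{TP_{N}x_{n}\}_{n}$ then yields $\bigl\|\,|Tx_{n}-y|\wedge v\,\bigr\|\to 0$ for every $v\in F^{+}$, i.e. $\{Tx_{n}\}$ is $un$-convergent and $T\in L_{UKB}(E,F)$. The genuinely delicate point is this transfer: the good $un$-behaviour of $\{x_{n}\}$ inside $E$ comes almost for free from atomicity and order continuity, but pushing it through a possibly non-order-bounded operator $T$ into a codomain with no order continuity is precisely what forces an essential use of the $WKB$-property and, correspondingly, the weaker ($un$-) form of the conclusion.
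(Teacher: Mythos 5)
Your positive case is fine (and is essentially Proposition \ref{2.1}), but the general case — which you correctly identify as the whole content — has a genuine gap: the step you yourself call ``the real obstacle'', namely
$\sup_n\bigl\|\,|T(x_n-P_Nx_n)|\wedge v\,\bigr\|\to 0$ as $N\to\infty$ for each $v\in F^{+}$,
is never proved, only hoped for. The mechanism you sketch for it (feed the tail sequences back into the $WKB$-hypothesis, get weak cluster values, then rerun the Mazur/convexity argument) does not apply: that argument needs a \emph{monotone} positive weakly null configuration in $F$, and for a non-positive $T$ the sequences $\{T(x_n-P_Nx_n)\}_n$ are not monotone, $|T(\cdot)|$ carries no usable order information since $T$ need not be order bounded (indeed a modulus need not exist when $F$ is merely a normed lattice), and $F$ is assumed to have neither order continuity nor atomicity, so weak convergence in $F$ by itself yields no $un$-information. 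Your analysis inside $E$ (heads $P_Nx_n$ norm convergent, tails uniformly $un$-small, $\{x_n\}$ $un$-Cauchy) is correct but it lives entirely in the domain; nothing you have pushes it through an arbitrary bounded $T$ into $F$, so the proof does not close.

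For comparison, the paper's proof is a two-line reduction: take the subsequence $\{Tx_{n_j}\}$ weakly convergent to some $y\in F$ provided by the $WKB$-hypothesis, and invoke Proposition 6.2 of \cite{9d} (weak convergence implies $un$-convergence in an atomic Banach lattice with order continuous norm) to get $Tx_{n_j}\xrightarrow{un}y$. Note that this applies the atomicity/order-continuity hypotheses to the lattice where the weak limit lives, i.e.\ to the image sequence in the codomain, whereas you tried to exploit them on the domain $E$ — and that mismatch is exactly where your attempt stalls: with lattice hypotheses only on $E$ and $F$ a general normed lattice, there is no known mechanism to upgrade the weak convergence of $\{Tx_{n_j}\}$ to $un$-convergence for a non-positive $T$. (As printed, the theorem states the hypotheses on $E$ while the cited proposition is used in $F$; your difficulty is symptomatic of that tension, but in any case your argument as written does not establish the claim.)
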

\begin{proof}
Let $\{x_n\}\subset E^+$ be increasing sequence and $\text{sup}\Vert x_n\Vert<\infty$. If $T\in W_{KB}(E,F)$, there is a subsequence $\{x_{n_j}\}$ which  $\{Tx_{n_j}\}$ is weak convergent to some  point $y\in F$. By Proposition 6.2, from \cite{9d}, $\{Tx_{n_j}\}$ is unbounded norm convergent to $y$. Thus
$T\in L_{UKB}(E,F)$. 
\end{proof}

\begin{theorem}
Assume that $E$ and $F$ are Banach lattices and $F$ has order continuous norm. Then 
\begin{enumerate}
\item $W_{KB}(E,F)^+\subseteq L_{UKB}(E,F)$.
\item If $T:E\rightarrow F$ is surjective lattice homomorphism, then $T\in L_{UKB}(E,F)$.
\end{enumerate}
\end{theorem}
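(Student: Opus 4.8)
The plan is to handle the two parts separately: part (1) is essentially a corollary of Proposition \ref{2.1} together with the elementary inclusion $L_{KB}\subseteq L_{UKB}$, while part (2) is proved by a ``truncation'' argument exploiting surjectivity and the lattice‑homomorphism identities.

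For part (1), let $T\in W_{KB}(E,F)^{+}$. By Proposition \ref{2.1} a positive $WKB$‑operator between Banach lattices is a $b$‑weakly compact operator, hence a $KB$‑operator, i.e. $T\in L_{KB}(E,F)$. Since norm convergence trivially implies $un$‑convergence, the already noted inclusion $L_{KB}(E,F)\subseteq L_{UKB}(E,F)$ yields $T\in L_{UKB}(E,F)$. If one wants a self‑contained argument showing where order continuity of $F$ intervenes: for a positive increasing sequence $\{x_{n}\}\subseteq B_{E}$ the sequence $\{Tx_{n}\}$ is increasing and norm bounded in $F$; a weakly convergent subsequence $\{Tx_{n_{j}}\}$ has its weak limit $y$ equal to $\sup_{j}Tx_{n_{j}}=\sup_{n}Tx_{n}$, so $0\le Tx_{n}\le Tx_{n+1}\le y$ for all $n$, and order continuity of the norm of $F$ forces $\|Tx_{n}-y\|\to 0$; norm convergence of the whole sequence again gives $un$‑convergence.

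For part (2), let $T:E\to F$ be a surjective lattice homomorphism. Lattice homomorphisms are positive, so $T$ is bounded; fix a positive increasing sequence $\{x_{n}\}\subseteq B_{E}$, so that $\{Tx_{n}\}$ is increasing and norm bounded in $F$. The key step is a truncation. Given $u\in F^{+}$, surjectivity provides $w\in E$ with $Tw=u$, and putting $x=|w|$ we get $Tx=T|w|=|Tw|=u$ because $T$ is a lattice homomorphism. Then $0\le x_{n}\wedge x\uparrow$ with $x_{n}\wedge x\le x$, and
\[
T(x_{n}\wedge x)=Tx_{n}\wedge Tx=Tx_{n}\wedge u,
\]
so $\{Tx_{n}\wedge u\}$ is an increasing sequence in $F$ that is order bounded above by $u$. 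Since $F$ has order continuous norm (hence is Dedekind complete), $\{Tx_{n}\wedge u\}$ converges in norm to $\sup_{n}(Tx_{n}\wedge u)$. As $u\in F^{+}$ was arbitrary, all the truncations of the increasing sequence $\{Tx_{n}\}$ are norm convergent, and one concludes via the characterization of $un$‑convergence for increasing norm bounded sequences that $\{Tx_{n}\}$ is $un$‑convergent; thus $T\in L_{UKB}(E,F)$.

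The main obstacle is precisely this last passage: deducing ``$\{Tx_{n}\}$ is $un$‑convergent'' from ``$\{Tx_{n}\wedge u\}$ is norm convergent for every $u\in F^{+}$''. Using the Riesz identity $(y_{n}-y_{m})\wedge u=y_{n}\wedge(y_{m}+u)-y_{m}$ for $n\ge m$ and order continuity of $F$, one checks that the tails $\sup_{n\ge m}\big((Tx_{n}-Tx_{m})\wedge u\big)$ decrease in $m$; proving that their infimum is $0$ (so that $\{Tx_{n}\}$ is at least $un$‑Cauchy, and genuinely $un$‑convergent once $F$ is $un$‑complete, e.g. when $F$ has a strong unit, where this is Theorem 2.3 of \cite{10a}) is the delicate point and is where care is needed. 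Everything else---boundedness of $T$, the homomorphism identities, and order boundedness of the truncated sequence---is routine.
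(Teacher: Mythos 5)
Your part (1) is correct, and it takes a slightly different route from the paper: you reduce a positive $WKB$-operator to a $b$-weakly compact (hence $KB$-) operator via Proposition \ref{2.1}, so the whole image sequence $\{Tx_n\}$ is norm convergent and therefore $un$-convergent (your alternative direct argument, identifying the weak limit of the increasing sequence $\{Tx_n\}$ with its supremum and invoking order continuity of $F$, is also sound). The paper instead keeps the weak convergence of $\{Tx_n\}$ and cites Proposition 6.3 of \cite{9d} to pass from weak to $un$-convergence; your version is more elementary and in fact does not even use the order continuity hypothesis on $F$.

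For part (2) you reproduce exactly the paper's truncation argument (preimages of $u\in F^{+}$ via surjectivity and $T|w|=|Tw|$, norm convergence of the order bounded increasing truncations $Tx_n\wedge u$ by order continuity of $F$), and the passage you yourself flag as ``delicate'' --- getting $un$-convergence of $\{Tx_n\}$ from norm convergence of all its truncations --- is precisely the point the paper elides with ``proof follows''. This is a genuine gap, and with the definitions as stated it cannot be closed: take $T=I_{c_0}$ (a surjective lattice homomorphism onto the order continuous Banach lattice $c_0$) and $x_n=\sum_{k=1}^{n}e_k\in B_{c_0}$. Then $x_n\wedge u$ converges in norm for every $u\in c_0^{+}$, but testing against $u=e_k$ shows the only possible $un$-limit of $\{x_n\}$ is the constant-one sequence, which does not lie in $c_0$; so $\{x_n\}$ is $un$-Cauchy but not $un$-convergent. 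Hence your argument (and the paper's) proves only that $\{Tx_n\}$ is $un$-Cauchy; to conclude $T\in L_{UKB}(E,F)$ in the literal sense one must either reinterpret the definition of $L_{UKB}$ in the $un$-Cauchy sense (which is evidently the paper's implicit usage, cf.\ Example \ref{Ex:12}) or add a hypothesis guaranteeing that norm bounded increasing $un$-Cauchy sequences $un$-converge (e.g.\ $F$ a $KB$-space, or a strong unit as in Theorem 2.3 of \cite{10a}). So your proposal is faithful to the paper's approach but, like the paper, is incomplete at this final step; the difference is that you correctly identified the weak point rather than asserting the conclusion.
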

\begin{proof}
Let $\{x_n\}\subset E^+$ be increasing sequence and $\text{sup}\Vert x_n\Vert<\infty$. Then
\begin{enumerate}
\item if $T\in W_{KB}(E,F)^+$,  $\{Tx_n\}$ is weakly convergence in $F$. By Proposition 6.3 from \cite{9d}, $\{Tx_n\}$ is unbounded norm convergence in $F$, and so $T\in L_{UKB}(E,F)$. 
\item let $x\in E^+$. Set $y_n=x_n\wedge x$, which follows that $y_n\uparrow\leq x$ and $\text{sup}_n\Vert y_n\Vert\leq \Vert  x \Vert$. Since $T$ is lattice homomorphism, $T$ is positive, which follows  $Ty_n\uparrow\leq Tx$. By using Theorem 4.11 \cite{1}, $\{Ty_n\}$ norm Cauchy, and so norm convergence in $F$. On the other hand, $T(x_n\wedge x)=Tx_n\wedge Tx$ is norm convergent and proof follows.
\end{enumerate}
\end{proof}
In the preceeding theorem, if $W_{KB}(E,F)$  is a vector lattice, we conclude that $W_{KB}(E,F)$ is a subspace of $ L_{UKB}(E,F)$ whenever $E$ and $F$ are Banach lattices. On the other hand,  if  $F^{\prime}$ has order continuous norm, then by using Theorem 6.4 \cite{9d},
$ L_{UKB}(E,F)^+\subseteq W_{KB}(E,F)$ whenever $E$ and $F$ are Banach lattices.\\

\bibliographystyle{amsplain}

\end{document}